\documentclass[12pt]{amsart}
\usepackage{amsmath,amssymb,amsthm,graphicx, url, import,verbatim, float,hyperref,ulem, epsfig,epstopdf,caption, xcolor} 
\usepackage{enumerate}
\usepackage{xcolor, relsize}

\newtheorem{lemma}{Lemma}[section]
\newtheorem{proposition}[lemma]{Proposition}
\newtheorem{theorem}[lemma]{Theorem}

\newtheorem{corollary}[lemma]{Corollary}
\newtheorem{question}[lemma]{Question}
\newtheorem{conjecture}[lemma]{Conjecture}

\newtheorem{problem}[lemma]{Problem}
\newcommand{\bcon}{\begin{conjecture}}
\newcommand{\econ}{\end{conjecture}}
\newcommand{\bcor}{\begin{corollary}}
\newcommand{\ecor}{\end{corollary}}
\newcommand{\bdf}{\begin{definition}}
\newcommand{\edf}{\end{definition}}
\newcommand{\benu}{\begin{enumerate}}
\newcommand{\eenu}{\end{enumerate}}
\newcommand{\beq}{\begin{equation}}
\newcommand{\eeq}{\end{equation}}
\newcommand{\bexa}{\begin{example}}
\newcommand{\eexa}{\end{example}}
\newcommand{\bexe}{\begin{exercise}}
\newcommand{\eexe}{\end{exercise}}
\newcommand{\bfac}{\begin{fact}}
\newcommand{\efac}{\end{fact}}
\newcommand{\bite}{\begin{itemize}}
\newcommand{\eite}{\end{itemize}}
\newcommand{\blem}{\begin{lemma}}
\newcommand{\elem}{\end{lemma}}
\newcommand{\bmat}{\begin{matrix}}
\newcommand{\emat}{\end{matrix}}
\newcommand{\bprb}{\begin{problem}}
\newcommand{\eprb}{\end{problem}}
\newcommand{\bpro}{\begin{proposition}}
\newcommand{\epro}{\end{proposition}}

\newcommand{\bque}{\begin{question}}
\newcommand{\eque}{\end{question}}
\newcommand{\brem}{\begin{remark}}
\newcommand{\erem}{\end{remark}}
\newcommand{\bthm}{\begin{theorem}}
\newcommand{\ethm}{\end{theorem}}

\newcommand{\bpr}{\begin{proof}}
\newcommand{\epr}{\end{proof}}

\theoremstyle{definition}
\newtheorem{definition}[lemma]{Definition}
\newtheorem{remark}[lemma]{Remark}
\newtheorem{example}[lemma]{Example}

\newtheorem*{namedtheorem}{\theoremname}
\newcommand{\theoremname}{testing}

\newcommand{\Z}{\mathbb{Z}}

\newcommand{\Q}{\mathbb{Q}}
\newcommand{\C}{\mathbb{C}}

\newcommand{\slC}{\mathrm{SL}_2(\mathbb{C})}

\renewcommand{\S}{\mathcal S}

\title{Torsion in skein modules and shared knot surgeries}

\author{Efstratia Kalfagianni}
\address{Department of Mathematics, Michigan State University, East
Lansing, MI, 48824, USA}
\email{kalfagia@msu.edu}

\def\cX{\mathcal X}

\begin{document}

\begin{abstract}  For every $g>0$, we construct a closed  Haken 3-manifold  $M_g$, whose
 only connected, incompressible surface is a non-separating surface  of genus $g$ and discuss the torsion behavior of the Kauffman bracket skein module $\S(M_g,\Z[A^{\pm 1}])$.
We also use properties of shared surgeries between knots to estimate (and often compute) the dimension
of the Kauffman bracket skein module over  $\Q(A)$, for families of hyperbolic 3-manifolds obtained by 
surgery on double twist knots. We have similar computations
for some surgeries on several knot complements in the SnapPy   census of cusped hyperbolic manifolds
with triangulations up to 9 tetrahedra.
 \end{abstract}

\maketitle

\section{Introduction}
\label{sec:intro}

Let  $M$ denote an oriented, closed $3$-manifold. The   Kauffman bracket skein module $\S(M,\Z[A^{\pm 1}])$,
with coefficients in $\Z([A^{\pm 1}])$, is the quotient of the free  $\Z([A^{\pm 1}])$-module on  isotopy classes of framed unoriented links in $M$, including the empty link $\emptyset$, quotient by the  relations:
\begin{figure}[h]
{\centering
\def \svgwidth{1.00\columnwidth}
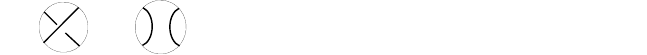}
\end{figure}

It is conjectured
that $\S(M,\Z[A^{\pm 1}])$ is \emph{tame} (e.g. finitely generated)  \cite{DKS2} and free \cite{BD} precisely when $M$ is irreducible and contains no two-sided, incompressible surfaces.
The first examples demonstrating that incompressible surfaces  produce torsion in $\S(M,\Z[A^{\pm 1}])$ where based on 3-manifolds that were not irreducible \cite{HP, Pr} or contained an incompressible torus \cite{V}.
Moreover, the proofs were based on exhibiting explicit torsion elements in the skein module of these 3-manifolds. It was asked whether 2-spheres and essential tori were the only surfaces tin 3-manifolds that are responsible for torsion in the skein module \cite[Question 1.92, (G-E)]{Kirby}.

Let $\zeta$ be a primitive root of unity of order $n$ and let $\Phi_{n}$  denote  the $n$-th cyclotomic polynomial.
In \cite{BD}, Belletti and Detcherry gave  the first examples of irreducible closed 3-manifolds that contain no incompressible tori but have torsion in their skein module. More precisely, \cite[Proposition 3.2 and Corollary 3.1]{BD},
shows that for every $g>1$, there is a closed Haken 3-manifold that contains no incompressible surfaces of genus less that $g$ but $\S(M,\Z[A^{\pm 1}])$ has $\Phi_n(A)$-torsion,
for all odd $n$ and all $n\equiv 2 \,  ({\rm mod}) 4$.
In this note, we construct closed  Haken 3-manifolds each of which contains a unique incompressible surface and their
$\Z[A^{\pm 1}]$-skein module has the same   torsion behavior as in the examples of  \cite{BD}.

To state our result we  recall from \cite{DKS} that a
  $\Z[A^{\pm 1}]$-module $S$ is \underline{tame} if it is a direct sum of cyclic $\Z[A^{\pm 1}]$-modules and $S$ does not contain $\Z[A^{\pm 1}]/(\Phi_{2n}(A))$ as a submodule, 
  for at least one odd $n$. In particular, every finitely generated $\Z[A^{\pm 1}]$-module is tame.

 \begin{theorem} \label{main} For any $g>0$, there exits a closed Haken  3-manifold $M_g$ such that:
\begin{enumerate}[(a)]
\item  For $g=1$, up to isotopy the only connected incompressible surface in $M_1$ is a non-separating torus.

\item For $g>1$, $M_g$ is hyperbolic and  fibers over $S^1$ with fiber of genus $g$, and any connected incompressible surface in $M_g$ is isotopic to the fiber.

\item  The module $\S(M_g,\Z[A^{\pm 1}])$ has $\Phi_{n}(A)$-torsion for every $n$  odd or  $n\equiv 2 \,  ({\rm mod}) 4$.
Hence, in particular, $\S(M_g,\Z[A^{\pm 1}])$  is not tame.
\end{enumerate}
\end{theorem}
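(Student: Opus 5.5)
We will build $M_g$ as a mapping torus $M_g = \Sigma_g\times[0,1]/(x,1)\sim(\phi(x),0)$ of a carefully chosen homeomorphism $\phi$ of the closed surface $\Sigma_g$. The torsion in part (c) will come from the Belletti–Detcherry mechanism, which we take as the model. For a non-separating incompressible surface $\Sigma$, their argument produces $\Phi_n(A)$-torsion for $n$ odd or $n\equiv 2 \pmod 4$ by comparing skein elements on $\Sigma$ on its two sides. At roots of unity, the $\phi$-coinvariants of the skein algebra $\S(\Sigma\times I)$ acquire extra classes coming from threading or Chebyshev-type central elements. The input is that $\phi$ acts on the relevant representation-theoretic data (for instance $H_1(\Sigma;\Z/2)$, or curves) in a way that forces a nontrivial invariant at roots of unity but not generically.

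So the plan is to choose $\phi$ satisfying three conditions at once:
\begin{enumerate}[(i)]
\item the hypotheses of \cite[Proposition 3.2]{BD} (or the analogous homological condition on $\phi$), which gives (c);
\item pseudo-Anosov for $g>1$, so that by Thurston's hyperbolization $M_g$ is hyperbolic;
\item $b_1(M_g)=1$, i.e.\ $\phi_*$ on $H_1(\Sigma_g;\Q)$ has no eigenvalue $1$.
\end{enumerate}

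\textbf{Uniqueness of the incompressible surface.} This is the main point of (a) and (b). Condition (iii) ensures that the Thurston norm ball lives in a one-dimensional $H_2(M_g)$. Hence every non-separating incompressible surface is homologous to a multiple of the fiber. By Thurston's theory of fibered faces, a norm-minimizing connected surface in a fibered class is isotopic to the fiber.

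It remains to exclude two kinds of surfaces:
\begin{itemize}
\item \emph{Separating incompressible surfaces.} An incompressible surface in a surface bundle can be isotoped to be either a fiber or transverse to the suspension flow. If it is separating it is null-homologous, and results of Thurston and Cooper–Long–Reid on surfaces in fibered manifolds then imply it is a virtual fiber or quasi-Fuchsian.
\item \emph{Closed quasi-Fuchsian (non-fibered) surfaces.}
\end{itemize}
Excluding quasi-Fuchsian surfaces is the key obstacle. A generic hyperbolic manifold has many such surfaces, by Kahn–Markovic. So "only incompressible surface" cannot hold for an arbitrary pseudo-Anosov, and it must be arranged by construction.

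\textbf{Handling the obstacle.} I would try to realize $M_g$ by Dehn filling a one-cusped hyperbolic fibered knot complement $N$ whose only closed essential surfaces are known, for example a small knot or once-punctured torus bundle complement. I would take $M_g$ to be a filling along the fiber slope, or a slope where Hatcher's finiteness of boundary slopes and the Culler–Shalen/CGLS results guarantee that no closed incompressible surface arises except the capped-off fiber. Concretely:
\begin{itemize}
\item Take $N$ to be a genus $g$ fibered knot complement in a small manifold, with $N$ small (no closed essential surface). Use twisted torus knots or a suitable 2-bridge family to get arbitrary genus.
\item Do $0$-surgery (the longitude filling). The result fibers with closed fiber $\hat F$ of genus $g$, by Gabai.
\item Gabai's theorem shows $\hat F$ is norm-minimizing and taut.
\item Any other closed incompressible surface in the filling would either survive from $N$ (impossible, since $N$ is small) or come from a surface in $N$ with boundary slope equal to the longitude. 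Culler–Gordon–Luecke–Shalen-type arguments show such a surface must be the fiber.
\end{itemize}

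\textbf{The case $g=1$.} Here $0$-surgery on a genus one fibered knot, such as the figure-eight, gives a torus bundle with Anosov monodromy. Its only incompressible surface is the fiber torus, by standard results on Sol manifolds. It is Haken but not hyperbolic, consistent with (a).

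\textbf{Finishing (c).} It remains to check the Belletti–Detcherry condition. For $0$-surgery on a knot, $b_1=1$ and $H_1=\Z$. I would verify their criterion by a homological/character-variety count of $\mathrm{SL}_2$-type representations fixed by $\phi$ at roots of unity, which should hold uniformly. Non-tameness then follows from the definition, since $\Phi_{2n}$-torsion is produced for odd $n$.
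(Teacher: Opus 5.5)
\textbf{Uniqueness and hyperbolicity.} Your reduction for (a) and (b) is exactly the paper's: take $0$-surgery on a small fibered knot, so that every closed incompressible surface comes from an essential surface in the exterior with boundary slope $0$. But the step that carries all the content is missing. No general ``Culler--Gordon--Luecke--Shalen-type'' theorem says that in a small fibered knot exterior the only essential surface of slope $0$ is the fiber.
\begin{itemize}
\item Slope-$0$ surfaces whose boundary components are not coherently oriented cap off to null-homologous, hence separating, surfaces. Your Thurston-norm argument does not see these.
\item Non-orientable slope-$0$ surfaces must be excluded as well.
\end{itemize}
The paper gets this only by fixing the concrete family $K_g=D(3,2g)$ and using the Hatcher--Thurston classification for $2$-bridge knots \cite{Hath}. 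Essential surfaces correspond to continued fraction expansions of $2g/(6g-1)$, and the only expansion realizing slope $0$ is the one with all $|a_i|=2$. For such expansions, Hatcher shows the knot is fibered and all incompressible Seifert surfaces are isotopic. Leaving the family open (``twisted torus knots or a suitable $2$-bridge family'') leaves uniqueness unproved.

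Moreover, once you trade the chosen pseudo-Anosov $\phi$ for a knot surgery, you lose control of the capped-off monodromy. Uniqueness of the incompressible surface does not force hyperbolicity (think of $0$-surgery on a torus knot). You need $0$ to be a non-exceptional slope, which the paper takes from Brittenham--Wu \cite{BW}: the only exceptional slope of $[3,-2g]$ is $4g$. Your $g=1$ example, $0$-surgery on the figure-eight knot, is fine, since in an Anosov torus bundle the fiber is the only incompressible surface.

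\textbf{Torsion in (c).} This is the other genuine gap. The mechanism you sketch ($\phi$-coinvariants, threading, a homological condition on $\phi$ that ``should hold uniformly'') is not an argument. In fact no condition on the monodromy is needed: the torsion is forced by $b_1(M_g)\ge 1$ alone. The paper's argument runs as follows.
\begin{itemize}
\item By Belletti--Detcherry \cite{BD}, it suffices to show $\dim_{\C}S_\zeta(M_g)>\dim_{\Q(A)}\S(M_g,\Q(A))$. The right-hand side is finite by \cite{GJS}.
\item For $\zeta$ of order $n\equiv 2\pmod 4$, the proof of \cite[Theorem 2.1]{DKS} gives $\dim_{\C}S_\zeta(M_g)\ge\dim_{\C}S_{-1}(M_g)$.
\item $S_{-1}(M_g)\cong\C[\cX(M_g)]$ by \cite{PS}. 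Since $b_1(M_g)\ge 1$, the abelian characters form a curve in $X(M_g)$, so this ring is infinite-dimensional.
\item For $n$ odd, $-\zeta$ has order $2n\equiv 2\pmod 4$, and Barrett's isomorphism \cite{Barr} gives $\dim_{\C}S_\zeta(M_g)=\dim_{\C}S_{-\zeta}(M_g)$.
\end{itemize}
You noted $b_1=1$ but never connected it to infinite-dimensionality at $A=-1$, and that connection is the whole proof of (c).
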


At the moment, the mechanism by which surfaces of high genus contribute to torsion in  skein modules is not understood, and it not known if
non-isotopic incompressible surfaces can be distinguished from each other using properties of skein modules. We 
hope that further study of the examples constructed here  will produce explicit torsion elements in $\S(M_g,\Z[A^{\pm 1}])$ and shed some light on these questions.

Given a knot $K\subset S^3$,  let $M_K$ denote the complement of a neighborhood of $K$. Given a slope $r\in \Q\cup \{\infty\}$ we will use $M_K(r)$ to denote the 3-manifold obtained by $r$-surgery along $K$.
We say that two knots $K, K'$ in $S^3$ have \emph{shared surgeries}, if there exist $r_1, r_2\in \Q$ such that $M_K(r_1)$ is homeomorphic to $M_K(r_2)$.
In the second part of the paper, using properties of shared surgeries  of double twist knots, we estimate and often we compute  the dimension $ \dim_{\Q(A)}  \S(M, Q(A))$ for some families of hyperbolic  closed 3-manifolds obtained by Dehn filling along 
families of double twist  knots. For example, we have the following:
\begin{corollary}\label{one} For $|n|>1$,
consider the double twist knots  $L_n:=D(-2, 2n)$.
Then $M_{L_n}(1)$  is hyperbolic and we have  $ \dim_{\Q(A)}  \S(M_{L_n}(1),  Q(A))=4|n|.$
\end{corollary}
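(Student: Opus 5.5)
Three ingredients are needed. First, a shared surgery identifying $M_{L_n}(1)$ with a surgery on a two-bridge knot whose character variety is well understood. Second, the Detcherry–Kalfagianni–Sikora results: for closed $M$ with finitely many characters, all reduced, $\dim_{\Q(A)}\S(M,\Q(A))$ equals the number of points of the $\slC$ character variety $\cX(M)$. Third, a count of those points.

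\textbf{Step 1: hyperbolicity and a shared surgery.} $D(-2,2n)$ is the twist knot with $|n|$ full twists, and it is hyperbolic for $|n|>1$. Exceptional surgeries on hyperbolic twist knots are classified (Brittenham–Wu). Slope $1$ is not exceptional except for small cases such as the figure-eight or $5_2$, which need separate checking. By Thurston, $M_{L_n}(1)$ is therefore hyperbolic.

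Next, a twist knot is obtained from a component of the Whitehead link by $-1/n$ surgery on the other component. Doing $+1$ surgery on the knot component and then using Rolfsen twists, I expect $M_{L_n}(1)$ to be homeomorphic to a surgery $M_{K'}(r')$ on another double twist knot $K'$, for a suitable slope $r'$. I would establish this by an explicit Kirby-calculus move, i.e.\ a shared surgery of double twist knots.

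\textbf{Step 2: counting characters.} For a two-bridge knot $K$ with Riley polynomial, the $\slC$ characters of $M_K(r)$ split into two kinds.

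\begin{itemize}
\item Abelian characters factor through $H_1=\Z/p$, where $r=p/q$. For $r=1$, $H_1$ is trivial, so there is exactly one abelian character, the trivial one.
\item Nonabelian characters come from points on the nonabelian component of $\cX(M_K)$ satisfying the surgery equation $\mu^p\lambda^q=1$. They are counted by intersecting the Riley curve with the filling condition. Using the known A-polynomial, or the explicit trace formulas for twist knots, I would reduce this to a polynomial in the trace variable $x=\mathrm{tr}(\mu)$ and count its distinct roots.
\end{itemize}

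The claimed total is $4|n|$. By Bézout-type degree counts, the A-polynomial of $D(-2,2n)$ has $\lambda$-degree and $m$-degree growing linearly in $|n|$. I expect $4|n|-1$ nonabelian characters plus the trivial one, but the exact bookkeeping must be done.

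\textbf{Step 3: reducedness and finiteness.} The main obstacle is showing that all these characters are isolated and reduced, since the dimension formula needs $\cX(M)$ to be reduced and zero-dimensional.

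\begin{itemize}
\item Finiteness follows because $M$ is hyperbolic and small: twist knots are small, and Culler–Shalen theory says a positive-dimensional component would produce an incompressible surface. So one must check that slope $1$ is not a boundary slope of $L_n$. The boundary slopes of two-bridge knots are even integers by Hatcher–Thurston, so this holds.
\item For reducedness, the first approach I would try is to show the defining polynomial has no repeated roots, e.g.\ via a discriminant or resultant argument, or by computing $H^1(M;\mathrm{Ad}\rho)=0$ for each nonabelian $\rho$.
\end{itemize}

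If full reducedness is hard, the shared surgery from Step 1 helps: it can transport a reducedness statement already known for the other family of double twist knots. This is what I expect the paper does. The count from Step 2 then gives $\dim_{\Q(A)}\S(M_{L_n}(1),\Q(A))=4|n|$.
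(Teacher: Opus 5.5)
Your outline has the right shape, but the decisive steps are announced rather than proved. The partner knot $K'$ and slope $r'$ in Step~1 are never identified, the count of $4|n|$ characters in Step~2 is only ``expected'', and reducedness of $\cX(M_{L_n}(1))$ is left open.

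More seriously, your second ingredient drops a hypothesis. The Detcherry--Kalfagianni--Sikora theorem gives $\dim_{\Q(A)}\S(M,\Q(A))=|X(M)|$ for reduced $\cX(M)$ only when $\S(M,\Z[A^{\pm1}])$ is also tame (e.g.\ finitely generated). Comparing the generic rank with the fibre $\C[\cX(M)]$ at $A=-1$ is a semicontinuity argument, and it fails for non-tame modules; without tameness the equality is not known. Nothing in your Steps~2--3 addresses tameness: not Riley polynomials, Culler--Shalen finiteness, discriminants, or $H^1(M;\mathrm{Ad}\rho)$. Kitaeff's result for 2-bridge knots does not help either, because it excludes finitely many slopes for each fixed knot and so does not cover slope $1$ uniformly in $n$. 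Your Culler--Shalen finiteness argument is correct, but that is not where the difficulty lies.

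The missing idea is to take $K'=4_1$. $L_n$ is obtained from one component of the Whitehead link $W$ by $-1/n$ surgery on the other. The linking number is $0$ and $W$ has a symmetry exchanging its components, so $W(\pm1,-1/n)\cong W(-1/n,\pm1)$. This gives $M_{L_n}(1)\cong M_{4_1}(-1/n)$, for the sign/chirality in which the $\pm1$-filling of a component of $W$ is the figure-eight exterior rather than the trefoil exterior. This is the identification with $M_{4_1}(-1/n)$ that the paper quotes from \cite{KM} in the proof of Corollary~\ref{twist}.

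Theorem~\ref{shared} then supplies all three missing pieces at once:
\begin{itemize}
\item finite generation for $p/q\neq 0,\pm4$;
\item reducedness of $\cX(M_{4_1}(p/q))$ when $p=\pm1$;
\item the count, which for $p/q=-1/n$ is $\tfrac12(|4n-1|+|4n+1|)+0+0=4|n|$.
\end{itemize}
Hyperbolicity is also immediate this way, since $-1/n\notin\{0,\pm1,\dots,\pm4\}$ when $|n|>1$.

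Your own hyperbolicity argument should be dropped. For hyperbolic twist knots of a fixed chirality, the slopes $1,2,3$ (or their negatives) are exceptional for every $n$, because the corresponding fillings of $W$ are Seifert fibered. So whether slope $1$ is hyperbolic is a chirality-convention question, not a matter of checking small cases such as $5_2$.
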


We have similar computations for families of surgeries along the twist knots $D(-3, 2n)$ and of some Dehn fillings on several knot complements in the SnapPy   census of  hyperbolic manifolds
with triangulations up to 9 tetrahedra (Tables \ref{tab: qhyp knots2-7} and \ref{tab: qhyp knots8-9}).
The manifolds of Theorem \ref{main} will  also be constructed by surgery on double twist knots and the monodromies of the fibrations  in part (b) are given explicitly (Remark \ref{monodromy}).

\begin{problem}\rm{
Generalize the method of \cite{DW} to compute $\S(M_g, Q(A))$ for the fiber bundles $M_g$ of Theorem \ref{main}(b).}
\end{problem}

\subsection{Acknowledgement}
The research of the author is partially supported by the NSF grants DMS-2603192
 and DMS-2304033.

\medskip

\section{Haken manifolds with unique incompressible surfaces}
Recall that a knot $K\subset S^3$ is called { \emph{small}} if $M_K$ contains  no-closed incompressible surfaces other than  boundary parallel tori.
 We will construct the manifolds $M_g$ in the Theorem \ref{main} by $0$-surgery on a family of small knots $\{K_g\}_{g>0}$.
 Since $K_g$ is small, any closed, incompressible surfaces 
in $M_g:=M_{K_g}(0)$, will come from properly embedded, incompressible, non-boundary parallel surfaces $(S, \partial S)\subset (M_{K_g}, \partial M_{K_g})$, where each component of  $\partial S$ represents  the zero slope on $\partial M_{K_g}$.
In particular, $S$ will be non-separating in $M_{K_g}(0)$.
The families of knots we will use are double twist knots:
\begin{itemize}
\item For $n>0$,  consider the double twist knot $K_g:=D(3, 2n)$ and set $g:=n$ or the twist knot $K_g=D(-3, -2n)$  and set $g:=-n$.
We will take $M_g:=M_{K_g}(0)$, to be the 3-manifold obtained by $0$-surgery on $K_g$.
\begin{figure}[h]
  \includegraphics[scale=0.6]{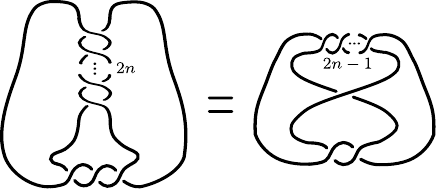} \ \ \ \ \ 
    \includegraphics*[scale=0.6]{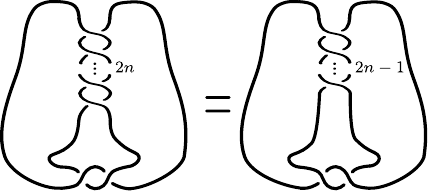}
  \caption{Double twist knots  $D(-3, -2n)$  and $D(-2, -2n)$.}
 \label{fig: alteven}
\end{figure}

\item For $n>1$,  consider the twist knot $K^n_1:=D(2, 2n)$  or the twist knot $K_1^n=D(-2, -2n)$. Again, take  $M_1^n:=M_{K^n_1}(0)$, the 3-manifole
obtained  by $0$-surgery on $K^n_1$.

\item  
Using the conventions of \cite{BC, KM}, for $m,l>0$,
the double twist $D(l, m)$ is the 2-bridge knot $K(\frac{m}{ml - 1})$ 
 associated with
the rational number 
  $\frac{m}{ml - 1} = \frac{1}{l - \frac{1}{m}}$, which corresponds to the  continuous fraction expansion $[l, -m]$.
 Similarly,  $D(l, -m)$  is the 2-bridge knot $K(\frac{m}{ml +1})$ 
 associated with
the rational number 
  $\frac{m}{ml + 1} = \frac{1}{l +\frac{1}{m}}$, which corresponds to the  continuous fraction expansion $[l, m]$.
\end{itemize}

\begin{lemma}\label{manifolds1} For any $g>1$,  $M_g$  hyperbolic and fibers over $S^1$ with fiber of genus $g$. Furthermore, every incompressible, connected, two-sided surface  in $M_g$  is isotopic to the fiber.
\end{lemma}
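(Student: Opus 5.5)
The plan is to read everything off the boundary-slope structure of the 2-bridge knot $K_g$, i.e. $D(3,2n)$ or $D(-3,-2n)$ with $g=|n|$.

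\emph{Step 1: fibering and genus.} The continued fraction $[3,-2n]$ (resp.\ $[-3,2n]$) has one odd entry $\pm3$. Flipping it to a nearby even expansion gives an expansion of the same rational number with all entries $\pm 2$ and length $2g$. By the Gabai/Kanenobu criterion a 2-bridge knot fibers iff such an all-$\pm2$ expansion exists, and its length is twice the genus. So $K_g$ is fibered with fiber $F$ of genus $g$. Alternatively, the Alexander polynomial of $D(l,\pm m)$ is monic of degree $2g$, and Murasugi's criterion for alternating knots gives the same conclusion. Capping $\partial F$ with a meridian disk of the surgery solid torus shows that $M_g=M_{K_g}(0)$ fibers over $S^1$ with closed fiber $\hat F$ of genus $g$. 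Its monodromy $\hat\phi$ is the extension of the knot monodromy, which will be written as a product of Dehn twists (Remark \ref{monodromy}).

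\emph{Step 2: hyperbolicity.} Since $K_g$ is a hyperbolic 2-bridge knot, the slope $0$ is not exceptional, by Brittenham--Wu's classification of exceptional surgeries on 2-bridge knots. For $D(3,2n)$ with $n\ge2$ the exceptional slopes are known, and $0$ is among them only in small cases. I expect to need the hypothesis $g>1$ exactly here: for $g=1$, $0$-surgery yields a torus bundle. So $M_g$ is hyperbolic. Equivalently, $\hat\phi$ is pseudo-Anosov.

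\emph{Step 3: uniqueness of the incompressible surface.} Let $S\subset M_g$ be a connected, two-sided incompressible surface. Since $K_g$ is small (Hatcher--Thurston: 2-bridge knots are small), $S$ comes from an essential surface $(S',\partial S')\subset(M_{K_g},\partial M_{K_g})$ with boundary slope $0$, as explained before the lemma; $S'$ is non-separating. By Hatcher--Thurston, the boundary slopes of $K(p/q)$ are computed from continued fraction expansions. Slope $0$ arises only from the all-even expansion, and the only such surface is the fiber $F$ (a Seifert surface). Moreover any incompressible surface carried by that branched surface with slope $0$ is a parallel copy of $F$. Hence $S'$ consists of copies of $F$, and connectedness of $S$ forces $S\cong\hat F$.

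For a cleaner finish in the hyperbolic fibered setting, I would invoke Thurston's norm: an incompressible surface in a fibered manifold with $b_1=1$ is either isotopic to the fiber or has homology class zero. Here $b_1(M_g)=1$ since $H_1(M_{K}(0))=\Z$. A null-homologous incompressible surface would be separating, contradicting Step 3's conclusion that $S$ is non-separating. A non-separating incompressible surface in the class $\pm[\hat F]$ is norm-minimizing, hence a fiber, hence isotopic to $\hat F$.

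\emph{Main obstacle.} The hardest step is the Hatcher--Thurston bookkeeping: verifying that slope $0$ is realized only by the Seifert fiber surface, including the orientation conventions for $D(\pm3,\pm2n)$. The exceptional-slope check in Step 2 is also a source of risk. If the branched-surface argument is awkward, I would fall back on the homology/Thurston norm argument above, which needs only smallness of $K_g$ and $b_1(M_g)=1$.
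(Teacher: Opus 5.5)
Your main line (Steps 1--3) is essentially the paper's proof. Hyperbolicity of $M_{K_g}(0)$ comes from Brittenham--Wu: for $[3,-2g]$ with $g>1$ the only exceptional slope is $4g$, which is exactly where $g>1$ enters. Smallness of 2-bridge knots reduces the problem to slope-$0$ essential surfaces in $M_{K_g}$. Hatcher--Thurston then shows that the only slope-$0$ expansion of $2g/(6g-1)$ is the all-$\pm2$ one; the other expansions give slopes $4g$, $4g-6$ and $-4$. The surface of the all-$\pm2$ expansion is the fiber, and it is the unique incompressible Seifert surface up to isotopy. Getting fiberedness from the Gabai/Kanenobu criterion, rather than reading it off that same expansion, is only a cosmetic difference.

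One correction concerns your fallback. It does not need ``only smallness of $K_g$ and $b_1(M_g)=1$''. Smallness only tells you that $S$ caps off an essential slope-$0$ surface $S'\subset M_{K_g}$. If $S$ separated $M_g$, then $S'$ would just have an even number of boundary longitudes with alternating orientations, so that $[S']=0$ in $H_2(M_{K_g},\partial M_{K_g})$. Neither smallness nor $b_1=1$ excludes this, and the Thurston norm says nothing about null-homologous surfaces. Non-separation has to come from the Hatcher--Thurston description of the slope-$0$ surfaces. So the norm argument can at best replace the final step (an incompressible surface homologous to the fiber is isotopic to it); it cannot replace the continued-fraction bookkeeping.
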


\begin{proof}Since $ D(-3,- 2n)$  is simply the mirror image of $D(3, 2n)$, we will restrict ourselves in discussing the case $K_g:=D(3, 2g)$, $g>0$.
 Since $K_g$ is a 2-bridge knot,  and hence alternating, but not a $(2, p)$ torus knot, it is hyperbolic \cite{menasco}.
The 
exceptional slopes for all 2-bride knots are known and in particular the only exceptional slope of  $K_g=[3, -2g]$ is $4g$
 \cite[Theorem 1.1 (a)]{BW}. Hence $M_g$ is hyperbolic.

Since 2-bridge knots are  known to be small, any incompressible surfaces in $M_g$ will come from essential surfaces in the complement of $K_g$ that realize the boundary slope zero.
By  \cite{Hath}, essential surfaces in the complement of $K_g$ are determined from continuous fraction expansions  $[a_1, \dots, a_k]$ of the fraction
$\frac{2g}{6g-1}$.  
For twist knots, $D(l, m)$,  $m,l>0$, the
list of continuous fractions with the corresponding boundary slopes they represent, is  worked out in detail in \cite[Table 3]{BC}. For $l=3, m=2g$, they are as follows.
\begin{enumerate}[(i)]
\item $[3, -2g]$ corresponds to boundary slope $4g$.
\item $[2, 2, \ldots, (-1)^{2g} 2]$, where we have a $2g-2$ length sequence of $-2, 2$  corresponds to boundary slope $0$.
\item $[-2, 2, 2g-1]$ corresponds to boundary slope $4g-6$.
\item $[-2, 3, \ldots, (-1)^{2g+1} 2]$ corresponds to boundary slope $-4$.
\end{enumerate}

The  only continuous fraction expansion that corresponds to the zero slope is the one in $(ii)$. 
 Since we have a sequence $[a_1, \dots, a_k]$ with all $|a_i| = 2$, the knot is fibered and the incompressible surface $F_g$ corresponding 
to this sequence is the fiber.
Hence $F_g$ is a minimum genus surface for $K_g$ and the genus of $K_g$ is $g$ \cite{Hath}.
The property   that $|a_i| = 2$, also guarantees that all the incompressible Seifert surfaces of $K_g$ have the same genus and are isotopic to each other \cite[Corollary on Page 230]{Hath}.
This implies that every connected, incompressible surface in $M_g$ is also is isotopic to $F_g(0)$, the image of $F_g$ in $M_{K_g}(0)$. Moreover, the 3-manifold  $M_g$ fibers over $S^1$ with fiber the closed surface $F_g(0)$.
\end{proof}

\begin{lemma}\label{manifolds2} For any $n>0$ the knot $K^n_1:=D(2, 2n)$ has genus one. The 3-manifold $M_1^n:=K^n_1(0)$
is Haken and every connected incompressible surface in it is isotopic to the torus $F^n_1(0)$, that is the image of a genus one Seifert surface of $K^n_1$ in $M_1^n$.
\end{lemma}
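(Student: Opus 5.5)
The plan is to mirror the proof of Lemma \ref{manifolds1}, using Hatcher--Thurston's classification \cite{Hath} of incompressible surfaces in 2-bridge knot complements. Since $D(-2,-2n)$ is the mirror image of $D(2,2n)$, it suffices to treat $K^n_1=D(2,2n)$. By the conventions recalled above, this is the 2-bridge knot $K(\frac{2n}{4n-1})$ with continued fraction $[2,-2n]$.

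First we list the boundary slopes via \cite[Table 3]{BC} with $l=2$, $m=2n$. The candidate expansions of $\frac{2n}{4n-1}$ with even-type entries are $[2,-2n]$, giving slope $4n$; the expansion with a sequence of $\pm 2$ entries, giving slope $0$; and the remaining expansions with some $|a_i|\geq 3$, which give nonzero slopes. I will check from the table that exactly one expansion yields slope zero. Since $[2,-2n]$ has length two, a minimal Seifert surface is the genus one surface $F^n_1$ obtained by plumbing two twisted bands; its genus is one because the knot is nontrivial and of genus at most one. So $g(K^n_1)=1$.

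Next, by \cite{Hath}, 2-bridge knots are small. Hence any closed incompressible surface in $M^n_1$ arises from an essential surface in $M_{K^n_1}$ with boundary slope $0$, capped off by meridian disks of the filling. Each zero-slope surface comes from the unique slope-zero continued fraction, so it is a Seifert surface, possibly with parallel copies or coverings of the branched surface. For the genus statement, the unique slope-zero expansion must be the one realizing $F^n_1$. If $[2,-2n]$ itself has slope $4n\neq 0$, the slope-zero surface is the one built from the relevant expansion, and its genus equals the knot genus, namely one. To get uniqueness up to isotopy, I would invoke the corollary of \cite[p.~230]{Hath} (or Kakimizu-type uniqueness results for 2-bridge knots): incompressible Seifert surfaces of a 2-bridge knot whose slope-zero expansion is unique are all isotopic. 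Alternatively, for genus one 2-bridge knots one can use that incompressible Seifert surfaces are unique. Connected incompressible surfaces in $M^n_1$ are then isotopic to $F^n_1(0)$.

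Finally, $F^n_1(0)$ is incompressible in $M^n_1$, which gives the Haken property. Here I would use Gabai's theorem that $0$-surgery on a nontrivial knot has a Thurston norm-minimizing capped-off Seifert surface, which is incompressible; $M^n_1$ is irreducible by the same theorem. I expect the main obstacle to be the uniqueness step. One must confirm that the relevant \cite{Hath} criterion applies, since the fiberedness argument of Lemma \ref{manifolds1} is unavailable here ($D(2,2n)$ is not fibered for $n>1$). One must also rule out non-orientable or multi-component zero-slope surfaces producing other connected incompressible surfaces after capping off. A surface built from the continued fraction with multiple sheets is a union of parallel Seifert surfaces, so its components are isotopic to $F^n_1(0)$.
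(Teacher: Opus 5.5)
Your proposal has a genuine error at the step that carries the argument: deciding which Hatcher--Thurston surfaces have boundary slope $0$. You carried over the slope pattern from Lemma~\ref{manifolds1}, where $[3,-2g]$ has slope $4g$ and the $\pm2$-expansion has slope $0$. With $l=2$ this pattern is wrong. The expansion $[2,-2n]$ of $\frac{2n}{4n-1}$ has all entries even, so it is the unique all-even expansion. By \cite{Hath} the surface it determines is the Seifert surface obtained by plumbing the two bands, and its slope is $0$, not $4n$; this is item (i) in the paper's list. Conversely, for $n>1$ there is no expansion made of $\pm 2$'s at all. Such an expansion would be all-even, hence equal to $[2,-2n]$, and would make $K^n_1$ fibered, which you yourself note is false.

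So the proposal is internally inconsistent: you use $[2,-2n]$ to build the genus one Seifert surface while assigning it a nonzero slope. The sentence claiming that ``the slope-zero surface built from the relevant expansion'' has genus one is therefore unsupported, since you never identify that expansion. The check you postpone (``I will check from the table\dots'') is in fact the whole proof. Reading \cite[Table 3]{BC} with $l=2$, $m=2n$, the expansion $[2,-2n]$ is the only one of slope $0$, and all others have nonzero slopes. Hence every essential surface in $M_{K^n_1}$ with slope $0$ comes from $[2,-2n]$, i.e.\ it is the genus one plumbed Seifert surface.

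Two smaller points. First, the uniqueness statement you attribute to \cite[p.~230]{Hath} (``incompressible Seifert surfaces are all isotopic when the slope-zero expansion is unique'') is not a correct paraphrase. Your fallback, that genus one 2-bridge knots have unique incompressible Seifert surfaces, is false in general. For $D(4,6)=K(6/23)$ the plumbed surface has Seifert form $\begin{pmatrix}2&1\\0&3\end{pmatrix}$, and this form is not isomorphic to its transpose. Hence the surface is not isotopic to its image under the strong inversion. What you actually need is a statement specific to this expansion: the slope-zero surfaces that \cite{Hath} attaches to $[2,-2n]$, whose first entry is $2$, are all isotopic to the plumbed genus one surface. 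This is analogous to how the $\pm2$ entries are used in Lemma~\ref{manifolds1}. Second, on the positive side, your use of Gabai's theorem is correct. It gives irreducibility of $M^n_1$ and incompressibility of $F^n_1(0)$, which makes the Haken conclusion explicit where the paper's proof leaves it implicit.
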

\begin{proof}
A genus one surface for $K^n_1$  is obtained by applying Seifert's algorithm to the standard diagram of Figure \ref{fig: alteven}.
Once again,  essential surfaces in the complement of $K_n^1$ are determined from continuous fraction expansions  $[a_1, \dots, a_k]$ of the fraction
$\frac{2n}{4n-1}$ and by  \cite[Table 3]{BC},  we have the following possibilities:

\begin{enumerate}[(i)]
\item $[2, -2n]$ corresponds to boundary slope $0$.
\item $[1, 2, \cdots, (-1)^{2n} 2]$, where we have a $2g-2$ length sequence of $-2, 2$  corresponds to boundary slope $-4n$.
\item $[-2, 2, 1-2m]$ corresponds to boundary slope $-4$.
\item $[-3, -2,  \cdots, (-1)^{2n+1} 2]$ corresponds to boundary slope $-2-4m$.
\end{enumerate}
It follows that the  only continuous fraction expansion that represents the 0-slope is $[2, -2n]$ which represents the genus one Seifert surface.
\end{proof}

\begin{remark} \label{monodromy}On the surface of genus $g$ and with one boundary component $F_{g,1}$, consider the curves $c, a_1, b_1, \cdots, b_{g-1}, a_g$ shown in Figure \ref{fig: generatingcurves} .
 \begin{figure}[h]
    \centering
    \includegraphics[scale=0.5]{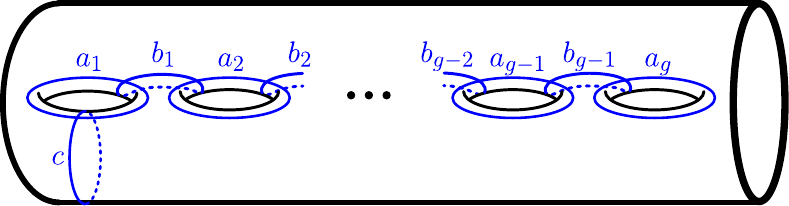}
    \caption{}
    \label{fig: generatingcurves}
  \end{figure}
As pointed out in \cite{KM}, the monodromy of the fibration of the complement of $K_g$ of Lemma \ref{manifolds1}  is the mapping class
$\tau_c\tau_{a_1} \tau^{-1}_{b_1}\tau_{a_2} \cdots \tau^{-1}_{b_{g-1}}\tau_{a_g},$
where for a simple closed curve  $a\subset  F_{g, 1}$, we  let $\tau_a $  denote
the Dehn twist along $a$ and $\tau^{-1}_a$ its inverse and
let ${\tilde \tau}_a$ denote the extension of $\tau_a$ on $F_g:=F_{g,0}$.
The monodromy of $M_g$ is 
$\phi_g={\tilde \tau}_c {\tilde \tau}_{a_1}{\tilde  \tau}^{-1}_{b_1}{\tilde \tau}_{a_2} \cdots {\tilde \tau}^{-1}_{b_{g-1}}{\tilde \tau}_{a_g}.$

\end{remark}

\smallskip

Combining Lemmas \ref{manifolds1} and \ref{manifolds2} we have the following:
\begin{proposition}\label{manifolds} For any $g>0$, there is a closed, Haken  3-manifold $M_g$ such that:
\begin{enumerate}[(a)]
\item  For $g=1$, up to isotopy the only connected incompressible surface in $M_1$ is a non-separating torus.

\item For $g>1$, $M_g$ is hyperbolic and  it fibers over $S^1$ with fiber of genus $g$. Moreover, any connected incompressible surface in $M_g$ is isotopic to the fiber.
\end{enumerate}

\end{proposition}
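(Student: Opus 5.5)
The plan is to assemble the proposition directly from Lemmas \ref{manifolds1} and \ref{manifolds2}, with a small amount of extra checking for the genus one case.

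For $g>1$, set $M_g:=M_{K_g}(0)$ with $K_g=D(3,2g)$. Lemma \ref{manifolds1} gives part (b) verbatim: $M_g$ is hyperbolic, fibers over $S^1$ with fiber $F_g(0)$ of genus $g$, and every connected, incompressible, two-sided surface is isotopic to the fiber. Haken-ness also comes for free. $M_g$ is irreducible because it is hyperbolic. The fiber of a fibration of a closed manifold over $S^1$ with genus $g\geq 1$ is incompressible, since it is $\pi_1$-injective via the long exact sequence of the fibration. One remark is needed: since $M_g$ is orientable, any one-sided incompressible surface has an orientable, two-sided incompressible boundary of its regular neighborhood. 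So restricting to two-sided surfaces, as the statement implicitly does for "incompressible surface", loses nothing.

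For $g=1$, fix some $n>1$ and set $M_1:=M^n_1=M_{K^n_1}(0)$ with $K^n_1=D(2,2n)$. Lemma \ref{manifolds2} says every connected incompressible surface in $M_1$ is isotopic to the torus $F^n_1(0)$. To conclude part (a) and Haken-ness, three facts must be checked:
\begin{enumerate}[(1)]
\item $F^n_1(0)$ is non-separating. This holds because it meets a meridian circle of the surgery solid torus (the core of the filling) exactly once. Equivalently, it is dual to a generator of $H_1(M_1)\cong\Z$.
\item $F^n_1(0)$ is incompressible. It is the capped-off genus one Seifert surface. By Gabai's theorem on $0$-surgery, a minimal genus Seifert surface of a nontrivial knot caps off to a Thurston-norm-minimizing, hence incompressible, surface in $M_K(0)$. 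Alternatively, one can use that this surface is the one produced by the expansion $[2,-2n]$ in Hatcher--Thurston's list.
\item $M_1$ is irreducible. This is the standard consequence of Gabai's theorem that $0$-surgery on a nontrivial knot is prime and irreducible.
\end{enumerate}
Then $M_1$ is Haken, and its only connected incompressible surface up to isotopy is a non-separating torus.

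The main subtlety, though minor, is the genus one case. Lemma \ref{manifolds2} as stated controls only which surfaces are incompressible, not that the torus actually is incompressible or that $M_1$ is irreducible. Step (2) is where I would lean on Gabai's Property R results, citing them rather than reproving them. The hyperbolic case has no comparable gap, since fibering handles everything.
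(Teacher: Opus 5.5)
Your proposal is correct and is essentially the paper's argument: the paper simply combines Lemmas \ref{manifolds1} and \ref{manifolds2}, with non-separation of the capped-off surface noted at the start of Section 2, and your Gabai-based checks make explicit the irreducibility and incompressibility that Lemma \ref{manifolds2} asserts but whose proof does not verify. One small correction to step (2): every torus has Thurston norm $0$, so norm-minimality says nothing about $F^n_1(0)$; deduce its incompressibility from (1) and (3) instead, since compressing a non-separating torus would produce a non-separating $2$-sphere in the irreducible $M_1$ (and note that the Hatcher--Thurston alternative only gives incompressibility in the knot exterior, not after capping off in $M_{K^n_1}(0)$).
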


\medskip

\section{Torsion in skein modules from non-separating surfaces}
For $M_g$ as in statement of Proposition \ref{manifolds},
let
$$\cX(M_g)=\mathrm{Hom}(\pi_1(M_g),\slC)/\hspace*{-.05in}/\slC,$$
denote the $\slC$-character variety of $M_g$, considered as a scheme over $\C$\cite{LM85}.
Let  $\C[\cX(M_g)]$ denote the coordinate ring of $\cX(M_g)$ and
 $X(M_g)$ denote  the algebraic set underlying $\cX(M_g)$.  We use $|X(M)|$ to denote  the cardinality of
$X(M_g)$.

Given a primitive root of unity root of unity of order $n$, say $\zeta$,  consider
$$S_{\zeta}(M_g):=S(M_g, \Z[A^{\pm 1}]) \underset{A=\zeta}{\otimes}   \C.$$


\begin{proposition} \label{torsion} The skein module  $\S(M_g,\Z[A^{\pm 1}])$ has $\Phi_{n}(A)$-torsion, for every $n$ odd  or  $n\equiv 2 \,  ({\rm mod}) 4$.
\end{proposition}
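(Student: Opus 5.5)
Following Belletti--Detcherry \cite{BD}, I plan to compare two dimensions and show they disagree. The torsion-free part gives the generic dimension $\dim_{\Q(A)}\S(M_g,\Q(A))$. Specializing at a root of unity $\zeta$ of order $n$ gives $\dim_{\C} S_\zeta(M_g)$. If $\S(M_g,\Z[A^{\pm 1}])$ had no $\Phi_n(A)$-torsion, then $\dim_\C S_\zeta(M_g)$ could not exceed the generic dimension. More precisely, a jump in dimension at $A=\zeta$ forces $\Phi_n$-torsion. This is the standard semicontinuity argument: localize at the prime $(\Phi_n)$ and use that a finitely generated module over the DVR $\Z[A^{\pm1}]_{(\Phi_n)}$ (or its extension to $\C$-coefficients) without torsion is free. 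So the proof reduces to two tasks: bound the generic dimension from above, and show that $S_\zeta(M_g)$ is infinite dimensional, or at least larger than that bound.

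For the specialization at $\zeta$ with $n$ odd or $n\equiv 2 \pmod 4$, I will use the threading/Chebyshev--Frobenius map of Bonahon--Wong and Frohman--Kania-Bartoszynska--L\^e. For such $\zeta$, $\zeta^{4}$ is a primitive root of unity of odd order $N$, and there is a map from the skein module at $A=\pm 1$ or $A=\pm i$ into $S_\zeta(M_g)$. The skein module at $A=\pm1$ is, up to sign twisting, the coordinate ring $\C[\cX(M_g)]$. The known results (Frohman--Kania-Bartoszynska--L\^e, and \cite{BD} as used there) then give a lower bound of the form $\dim S_\zeta(M_g)\ge \dim \C[\cX(M_g)]$, essentially the size of the image of the Frobenius map.

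So the key geometric input is that $\cX(M_g)$ is positive dimensional, which makes $\C[\cX(M_g)]$ infinite dimensional. I will get this from the non-separating surface. $M_g$ has $b_1=1$, since it is $0$-surgery on a knot, so there is a surjection $\pi_1(M_g)\to\Z$. Composing with diagonal representations $t\mapsto \mathrm{diag}(\lambda,\lambda^{-1})$ gives a curve of reducible characters, parametrized by $\lambda+\lambda^{-1}$. Hence $X(M_g)$ contains a curve and $|X(M_g)|=\infty$. This already holds uniformly for all $g$, including $g=1$, without using the fibration.

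For the generic side, I need $\dim_{\Q(A)}\S(M_g,\Q(A))<\infty$. This is the Gunningham--Jordan--Safronov finiteness theorem for closed $3$-manifolds. Thus $S_\zeta(M_g)$ is infinite dimensional while the generic dimension is finite, so there is $\Phi_n$-torsion.

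The main obstacle is making the lower bound at $\zeta$ rigorous. One must show that the Frobenius image of infinitely many linearly independent elements of $\C[\cX(M_g)]$ stays independent in $S_\zeta(M_g)$. I would first try to quote \cite[Proposition 3.2, Corollary 3.1]{BD} directly, since their criterion is that $\cX(M)$ is not a finite set for manifolds with these properties. Failing that, I would use evaluation against the reducible representations to detect independence, via the $\zeta$-twisted skein-to-character map.
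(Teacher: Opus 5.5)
Your outline matches the paper's: the Belletti--Detcherry jump criterion, finiteness of $\dim_{\Q(A)}\S(M_g,\Q(A))$ from \cite{GJS}, and the curve of abelian characters coming from $b_1(M_g)\ge 1$. The gap is that the one step carrying all the weight, namely that $\dim_\C S_\zeta(M_g)=\infty$, is exactly the step you leave open. The existence of a Chebyshev--Frobenius threading map $S_{\pm1}(M_g)\to S_\zeta(M_g)$ says nothing by itself about the size of its image. What is needed is the inequality $\dim_\C S_\zeta(M)\ge \dim_\C S_{-1}(M)$ for $\zeta$ of order $n\equiv 2 \pmod 4$. The paper takes this from \cite[Theorem 2.1]{DKS} and its proof, and combines it with $S_{-1}(M_g)\cong \C[\cX(M_g)]$ from \cite{PS}.

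Your fallbacks do not close the gap as stated. Evaluating skeins at reducible characters does not descend to $S_\zeta(M_g)$ when $\zeta\neq\pm1$. Trace functions of $\slC$-representations are insensitive to crossing changes, but the Kauffman bracket relation at $A=\zeta$ is not. Quoting \cite{BD} would only work if you pin down a statement there that gives the general lower bound. As written, you assert the key inequality rather than proving it or citing it correctly.

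Two smaller points. First, the paper does not run a separate Frobenius argument at $A=+1$ for odd $n$. It proves the case $n\equiv 2\pmod 4$ and then applies Barrett's isomorphism $S_\zeta(M)\cong S_{-\zeta}(M)$ \cite{Barr}, using that $-\zeta$ has order $2n\equiv 2\pmod 4$ when $n$ is odd. You need either that reduction or an $A=+1$ version of the lower bound to cover odd orders.

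Second, your justification of the jump criterion via ``a finitely generated torsion-free module over a DVR is free'' does not apply. $\S(M_g,\Z[A^{\pm1}])$ is not finitely generated; by the very theorem being proved, it is not even tame. The criterion still holds, as in \cite[Theorem 2.1]{BD}. Absence of $\Phi_n$-torsion alone lets you normalize a $\Q(A)$-linear relation among $d+1$ lifts of independent elements of $S_\zeta(M_g)$ so that some coefficient is nonzero at $\zeta$, which gives a contradiction.
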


\begin{proof} Let $\zeta$ be a primitive root of unity of order $n$.
Let  $ \S(M_g, \Q(A))$ denote the  Kauffman bracket skein module of $M_g$ defined the same way as before but with coefficients in the field $Q(A)$.
By Theorem 2.1 of \cite{BD} and its proof, if
\begin{equation}\label{criterio}
\dim_{\C}S_{\zeta}(M_g)> \dim_{\Q(A)}  \S(M_g, Q(A)),
\end{equation}
 then $\S(M_g,\Z[A^{\pm 1}])$ has $\Phi_{n}(A)$-torsion.
  Hence,  it is enough to show that inequality (\ref{criterio})  is true for every primitive   root of unity of  of order $n$ or of order  $n\equiv 2 \,  ({\rm mod}) 4$.

First, suppose that $\zeta$ is of order $n$, where  $n\equiv 2 \,  ({\rm mod}) 4$.
By \cite[Theorem 2.1]{DKS}, and its proof, $\dim_{\C}S_{\zeta}(M_g)\geq \dim_{\C}S_{-1}(M_g)$. By \cite{PS},
$S_{-1}(M_g)$ has a natural structure 
 of $\C$-algebra isomorphic with $\C[\cX(M_g)]$. Since $M_g$ contains a non-separating incompressible surface, $b_1(M_g)\geq 1$ and the abelian representations give a curve  $X(M_g)$.  Hence, the dimension $ \dim_{\C}S_{-1}(M_g)$ is infinite.
 It follows that $\dim_{\C}S_{\zeta}(M_g)$ is infinite. On the other hand, $ \dim_{\Q(A)}  \S(M_g, Q(A))$ is finite \cite{GJS}, and hence inequality (\ref{criterio}) holds.
 
 If  $\zeta$ has order $n$, where $n$ is odd,  then  $-\zeta$ has  order $n$, where  $n\equiv 2 \,  ({\rm mod}) 4$. Hence, by above argument, the dimension  $\dim_{\C}S_{-\zeta}(M_g)$
 is infinite. By \cite{Barr}, 
 $\dim_{\C}S_{\zeta}(M_g)=\dim_{\C}S_{-\zeta}(M_g)$. Hence inequality (\ref{criterio}) also holds if $\zeta$ has odd order.
 \end{proof}

 Now combining Propositions \ref{manifolds} and \ref{torsion} gives Theorem \ref{main}.

 \medskip.
 \section{Computations and estimates of  dimensions over $\Q(A)$}
 The following theorem, concerning knots that have shared surgeries with the knot $4_1$,  is a consequence of  \cite[Theorems 1.4,  4.3 and 6.3]{DKS}.
 \begin{theorem}\label{shared}
 Suppose $K$ is  a knot such that there are slopes ${a/b}, p/q\in \Q$ with
 $M_K(a/b)\cong M_{4_1}(p/q)$. Suppose moreover that $p/q\neq 0, \pm 4$. 
 \begin{enumerate} [(a)]
 \item $\S(M_K(a/b),\Z[A^{\pm 1}])$ is finitely generated over $\Z[A^{\pm 1}]$.
 \item We have
 \begin{equation}\label{dim}\dim_{\Q(A)} S({M_K}(a/b), \Q(A))\geq {1\over2} (|4q+p| + |4q-p|)+\epsilon_{p}  +\left\lfloor\frac{|p|}{2}\right\rfloor,
 \end{equation}
 where $\epsilon_{p} =0$ if $p$ is odd,  and $\epsilon_{p} =1$ if $p$ is even.
 Moreover,  (\ref{dim}) is an equality for all but finitely many $p/q$, including all the slopes with $p=1$.
 \end{enumerate}
  \end{theorem}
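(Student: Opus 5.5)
The plan is to transport the known results for the figure-eight knot to $M_K(a/b)$ through the homeomorphism $M_K(a/b)\cong M_{4_1}(p/q)$. The skein module is a homeomorphism invariant of the closed oriented 3-manifold. The choice of orientation is harmless: reversing orientation sends $A\mapsto A^{-1}$, which preserves both finite generation and $\dim_{\Q(A)}$. So both (a) and (b) reduce to the corresponding statements for $N:=M_{4_1}(p/q)$, and no property of $K$ itself is needed.

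For (a), I would invoke \cite[Theorem 1.4]{DKS}, which asserts that $\S(M_{4_1}(p/q),\Z[A^{\pm1}])$ is finitely generated for every slope except the exceptional ones that must be excluded. The excluded slopes $0,\pm4$ are exactly the toroidal fillings of $4_1$, whose fillings contain incompressible tori; these are the fillings the paper's hypothesis $p/q\neq 0,\pm4$ rules out. Rational surgeries on a knot in $S^3$ are closed, so finite generation over $\Z[A^{\pm1}]$ transfers verbatim.

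For (b), I would use that finite generation gives $\dim_{\Q(A)}\S(N,\Q(A))=\dim_\C S_{\zeta}(N)$ for generic $\zeta$. The lower bound then comes from the character variety count in \cite[Theorems 4.3, 6.3]{DKS}. At $A=-1$ (or via the generic-specialization argument of \cite{DKS}), the dimension is bounded below by $|X(N)|$ when $X(N)$ is finite and reduced.

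For $N=M_{4_1}(p/q)$, I would count the characters as follows.
\begin{itemize}
\item The abelian characters factor through $H_1=\Z/p$. They number $\lfloor |p|/2\rfloor+1$, which I would write as $\lfloor |p|/2\rfloor+\epsilon_p$ plus a correction tied to parity; the $\pm$ representations at order $2$ account for the $\epsilon_p$ term.
\item The irreducible characters are the intersections of the $4_1$ character curve with the surgery relation $\mu^p\lambda^q=1$. Their count is the Culler--Shalen norm $\tfrac12(|4q+p|+|4q-p|)$, minus the non-generic boundary points, since the boundary slopes of $4_1$ are $\pm4$.
\end{itemize}
Summing the two counts gives the right-hand side of (\ref{dim}).

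The equality statement is the main obstacle. It requires an upper bound, namely that the skein module specializes to $\C[\cX(N)]$ with $\cX(N)$ reduced, so that the dimension equals $|X(N)|$. \cite{DKS} proves this for all but finitely many $p/q$ by showing the Culler--Shalen intersections are transverse, and in particular for $p=1$. I would quote that statement directly rather than reprove it.
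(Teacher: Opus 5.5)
Your skeleton is the paper's: transport through $M_K(a/b)\cong M_{4_1}(p/q)$, then quote \cite{DKS}. Part (a), the orientation remark and the equality clause are fine. The two steps of (b) that you argue yourself, rather than cite, are wrong.

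\textbf{The lower bound.} Specializing at $A=-1$ gives an \emph{upper} bound, not a lower one. For a finitely generated module, semicontinuity yields $\dim_{\Q(A)}\S(N,\Q(A))\le\dim_{\C}S_{-1}(N)=\dim_{\C}\C[\cX(N)]$, which is what you use, correctly, for equality in your last paragraph. The inequality $\dim_{\Q(A)}\S(N,\Q(A))\ge|X(N)|$ is a separate result of \cite{DKS} for tame (e.g.\ finitely generated) skein modules, and it needs no reducedness. As you phrase it (``bounded below by $|X(N)|$ when $X(N)$ is finite and reduced''), you would obtain (\ref{dim}) only at slopes where $\cX(N)$ is reduced. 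The statement claims it for all $p/q\neq 0,\pm4$, and reducedness is only known for all but finitely many slopes.

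\textbf{The character count.} Your bookkeeping does not sum to the right-hand side.
\begin{itemize}
\item The reducible characters of $N$ are given by $\mathrm{tr}\rho(\mu)=\omega+\omega^{-1}$ with $\omega^p=1$. There are $\lfloor|p|/2\rfloor+1$ of them for \emph{every} $p$; the order-two one for even $p$ is already included. So $\epsilon_p$ cannot come from there.
\item The parity correction is on the irreducible side, which contributes $\frac12(|4q+p|+|4q-p|)-1+\epsilon_p$ characters.
\item To see this, take $\gamma=p\mu+q\lambda$. The function $I_\gamma-2$ has degree $|p-4q|+|p+4q|$ on the canonical curve. So $\frac12(|4q+p|+|4q-p|)$ is a quarter of the $\slC$ Culler--Shalen norm, not the norm itself.
\item Characters of $N$ are double zeros of $I_\gamma-2$, assuming transversality.
\item For $p$ odd, there are two parabolic characters with eigenvalues $(l,m)=(-1,m_0)$, $m_0^p(-1)^q=1$: a lift of the discrete faithful character and its complex conjugate. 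These are extra simple zeros of $I_\gamma-2$ that are not characters of $N$, and they produce the $-1$.
\item For $p$ even, $q$ is odd, and no parabolic character satisfies $m^pl^q=1$.
\end{itemize}
The boundary slopes $\pm4$ play no role in this subtraction. Excluding them only guarantees that $I_\gamma$ has poles at all ideal points.

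To close the argument, either carry out this corrected count together with the transversality input from \cite{DKS}, or simply quote \cite{DKS} for $|X(M_{4_1}(p/q))|$, as the paper does.
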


 The lower bound of  (\ref{dim})  is equal to   $|X( 4_1(p/q)|$  while one always  has $$\dim_{\Q(A)} \S(M, \Q(A))\leq  |\cX(M)|. $$
  It is known that $\cX (4_1(p/q))$ is reduced as scheme over $\C$ for all but finitely many slopes $p/q$ and in particular for all slopes with $p=1$  \cite{DKS}.
  In these
  cases 
 (\ref{dim}) is equality. Using results of \cite{FKBT},  Kitaeff \cite{Kittaeff} studied the skein modules $S({M_K}(r), \Q(A))$ for Dehn fillings of
 2-bridge knots. He showed that for any 2-bridge
 knot $K$, for all but finitely many $r\in \Q$, we have  $\dim_{\Q(A)} \S({M_K}(r))=|\cX(M_K(r))|$. 
 
  In \cite{KM} we studied the problem of which knot complements admit  shared surgeries with the knot $4_1$. Combining results from \cite{KM} with Theorem \ref{shared}, we compute the dimensions of the skein module over $\Q(A)$ for some 
 Dehn fillings along families of twist knots  as well as several 3-manifolds obtained by Dehn fillings of knots in  the SnapPy census. We hope the examples 
 presented here, will be helpful in studies aiming  to understand  the selection 
 principles of slopes to be excluded in Theorem \ref{shared} and in the results of \cite{Kittaeff}.

 \begin{corollary}\label{twist} For $|n|>1$, consider the double twist knots  $J_n:=D(-3, 2n)$ and $L_n:=D(-2, 2n)$ and let  $N_n:=M_{J_n}(1)$ and $N'_n=M_{L_n}(4n+1)$.
\begin{enumerate}[(a)]
   \item The 3-manifolds  $N_n$ and $N'_n$ are hyperbolic.
  \item  For all but at most finitely many $n$, we have $ \dim_{\Q(A)}  \S(N_n, Q(A))=6|n|+1.$
 \item For all $|n|>1$, we have $ \dim_{\Q(A)}  \S(N'_n, Q(A))=4|n|$.
 \end{enumerate}
  \end{corollary}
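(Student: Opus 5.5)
The plan is to identify each of $N_n$ and $N'_n$ with a surgery on the figure-eight knot $4_1$, and then read off the dimensions from Theorem \ref{shared}. The key input is the shared-surgery results of \cite{KM}. There it is shown, via explicit Kirby calculus or Rolfsen twists on the twist region of the double twist knot, that certain Dehn fillings of $D(-3,2n)$ and $D(-2,2n)$ are homeomorphic to fillings of $M_{4_1}$. Concretely, I expect \cite{KM} to give homeomorphisms
\[
M_{J_n}(1)\cong M_{4_1}(p_n/q_n), \qquad M_{L_n}(4n+1)\cong M_{4_1}(p'_n/q'_n).
\]
The reason is that $4_1=D(2,-2)$: a $\pm 1/m$ twist along an unknot encircling the $2n$-twist region changes the number of twists, so a suitable surgery on the resulting two-component link can be traded either way. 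My first step is to recover these slopes by tracking framings under the Rolfsen twist. For $L_n$, twisting $n-1$ times along the circle around the $2n$ crossings should reduce $D(-2,2n)$ to $4_1$ (or its mirror), and the slope $4n+1$ then transforms to a slope with $p'=1$, namely $1/q'$ with $|q'|$ of size about $|n|$. For $J_n$, the analogous computation should give a slope $p/q$ with $p$ independent of $n$ and $|q|$ growing linearly in $|n|$.

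Part (a) then follows from Thurston's hyperbolic Dehn surgery. The exceptional slopes of $4_1$ are exactly the integers $0,\pm1,\pm2,\pm3,\pm4$ and $\infty$, so $M_{4_1}(p/q)$ is hyperbolic whenever $|q|\ge 2$. For $|n|>1$ the slopes obtained above will have $|q|\geq 2$. Alternatively, one can appeal to the classification of exceptional slopes of 2-bridge knots \cite{BW} directly.

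For (c), I will apply Theorem \ref{shared}(b) with $p'=1$, in which case (\ref{dim}) is an equality. Since $p'=1$ is odd, $\epsilon_{p'}=0$ and $\lfloor 1/2\rfloor=0$, and the right-hand side is $\tfrac12(|4q'+1|+|4q'-1|)=4|q'|$. So I need $|q'|=|n|$; that is, the slope $4n+1$ on $L_n$ must correspond to $1/(\pm n)$ on $4_1$. This is the normalization I must verify carefully from \cite{KM}. For (b), the slope $p/q$ from the $J_n$ identification is, for all but finitely many $n$, one of the slopes where (\ref{dim}) is an equality, because only finitely many slopes are excluded. So
\[
\dim = \tfrac12(|4q+p|+|4q-p|)+\epsilon_p+\lfloor |p|/2\rfloor .
\]
For $|q|$ large this equals $4|q|+\epsilon_p+\lfloor|p|/2\rfloor$. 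Matching $6|n|+1$ forces something like $p=\pm 2$ with $|q|$ growing, or a $p$ for which the expression comes out linear in $n$ in this way. For example, $p=\pm 2$ and $q\approx$ gives $4|q|+2$, so I would expect $\epsilon_p+\lfloor |p|/2\rfloor$ together with the $q$-term to combine to $6|n|+1$, perhaps with $|4q\pm p|$ not both dominated by $4|q|$ if $p$ also grows with $n$. I will compute the exact slope and simplify.

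The main obstacle is the precise slope bookkeeping in the homeomorphisms from \cite{KM}, including orientation and mirror conventions for $D(l,m)$. Everything after that is substitution into Theorem \ref{shared} together with the finiteness caveat, which accounts for the ``all but finitely many $n$'' in (b).
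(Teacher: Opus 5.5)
You follow the paper's strategy: import from \cite{KM} a homeomorphism with a filling of $4_1$, then use the exceptional slopes of $4_1$ and Theorem \ref{shared}. But the step you postpone, pinning down the slopes, is the entire content of the proof, and first homology rules out the slopes you anticipate. For a knot $K\subset S^3$ one has $H_1(M_K(a/b))\cong \Z/|a|$, so $M_K(a/b)\cong M_{4_1}(p/q)$ forces $|p|=|a|$. Hence $M_{L_n}(4n+1)$, whose first homology has order $|4n+1|\geq 7$, can never be $M_{4_1}(1/q')$, and your argument for (c) cannot be completed. Likewise $M_{J_n}(1)$ is a homology sphere. So any $M_{4_1}(p/q)$ homeomorphic to it has $p=\pm 1$, and then (\ref{dim}) returns $4|q|$, an even number that can never equal $6|n|+1$. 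Your guess that $p$ does not depend on $n$ is therefore forced for $N_n$ as literally defined, and that is exactly why no choice of $q$ can produce the claimed value.

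The identifications the paper takes from \cite{KM} are $N_n\cong M_{4_1}((-4n-1)/n)$ and $N'_n\cong M_{4_1}(-1/n)$. By the same homology count these must be $M_{J_n}(4n+1)$ and $M_{L_n}(1)$. In other words, the coefficients $1$ and $4n+1$ in the statement are interchanged. This is consistent with Corollary \ref{one} and with Table \ref{tab: qhyp knots2-7}, where $6_2$ and $7_3$ (the cases $n=-2$ and $n=2$) appear with slopes $-7\leftrightarrow -7/2$ and $9\leftrightarrow -9/2$.

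With $N_n=M_{J_n}(4n+1)$ and $N'_n=M_{L_n}(1)$, your outline becomes the paper's proof:
\begin{itemize}
\item $|q|=|n|\geq 2$ gives (a).
\item $p=-1$ gives exactly $4|n|$ in (c).
\item For (b), take $p=-4n-1$ (odd, so $\epsilon_p=0$) and $q=n$. Use $\tfrac12(|4q+p|+|4q-p|)=\max(4|q|,|p|)$, and note that the slopes $(-4n-1)/n$ are pairwise distinct. Then for all but finitely many $n$ you get
\[
\dim_{\Q(A)}\S(N_n,\Q(A))=\max\bigl(4|n|,|4n+1|\bigr)+\left\lfloor \tfrac{|4n+1|}{2}\right\rfloor=|6n+1|.
\]
\end{itemize}
The extra $2|n|$ thus comes from the term $\lfloor |p|/2\rfloor$ with $|p|\approx 4|n|$, not from $q$.

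Note also that $|6n+1|=6|n|+1$ only for $n>1$. For $n<-1$ the value is $6|n|-1$; for $n=-2$ one gets $8+3=11$, the bound listed for $6_2$. So part (b) as stated needs this sign correction as well.
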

 \begin{proof}As shown in \cite{KM}  the 3-manifold $M_{4_1}((-4n-1)/n)$ obtained by
  $(-4n-1)/n$-surgery on the figure eight knot,  is homeomorphic to $N_n$. Furthermore, the 3-manifold
 $M_{4_1}(-1/n)$ obtained  $-1/n$-surgery on the figure eight knot,  is homeomorphic  to $N'_n$.
 The exceptional slopes for $4_1$ are $0, \pm1,\pm2, \pm 3$ and $\pm 4$. Since  $|n|>1$, $(-4n-1)/n$ and $-1/n$ are not exceptional and hence $N_n$ and $N'_n$ are hyperbolic.
 Parts (b) and (c) follow by applying Theorem \ref{shared} for $p/q=(-4n-1)/n$ and $p/q=-1/n$, respectively.
 \end{proof}

The list of knots among SnapPy's census of 1267 hyperbolic knot
complements that can be triangulated with fewer than 10 tetrahedra \cite{SnapPy}, and  share surgeries with the figure eight were listed  in \cite[Tables 1 and 2]{KM}. 
Futer, Purcell, and Schleimer originally wrote a software package \cite{FPS}  for testing the cosmetic
surgery conjecture. At our request, they modified their  code to also  allow for testing whether pairs of
cusped $3$-manifolds have common Dehn fillings as well as identifying those fillings \cite[Section 5 and Theorem 5,1]{FPS}.
Running their
code on SnapPy's census they verified the
data given in \cite{KM}. All the shared surgeries found give hyperbolic 3-manifolds.

\begin{table}
   \begin{center}
   \scriptsize 

    \begin{tabular}{ |c|c|c|c|c|}
     \hline
     $K$  & Slope $a/b$& $ \dim_{\Q(A)}  \S(K(a/b), Q(A))$ & Knot \\
     \hline
     $K2_1$    &   Slope $a/b, p/q$             & d(a/b)&  $4_1$ \\ \hline
     $K3_2$      & 5, -5  &$\geq 6$  &$5_2$ \\
                       & 1,  $1/2$        &$=8$  & \\ \hline
     $K4_1$      &  1, $-1/2$       &=8&   $6_1$  \\ \hline
     $K4_2$      &1, $1/3$       &$=12 $ &   $7_2$  \\ \hline
     $K5_2$      &  1, $-1/3$       & =12&  $8_1$ \\ \hline
     $K5_3$      &  1, $1/4$        & $ =16$ &  $9_2$ \\ \hline
     $K5_9$      &  $-2$, $2/3$     &$\geq 14$  &  $10_{132}$ \\ \hline
     $K5_{12}$   & 3, $3/2$        &$\geq 9 $  &  $8_{20}$ \\ \hline
     $K5_{13}$    & 1, $1/3$        & $=12$ & $11n_{38}$  \\ \hline
     $K5_{19}$   & $-7$, $-7/2$    & $\geq  11$  & $6_2$ \\ \hline
     $K5_{20}$   & 9, $-9/2$       &$\geq 12 $   &  $7_3$ \\ \hline
     $K6_1$      & 1, $-1/4$       &$=16$ & $10_1$  \\ \hline
     $K6_2$      & 1, $1/5$        &$=20$ &  $11a_{247}$ \\ \hline
     $K6_8$      & $-3$, $3/5$     & $\geq 21 $  &   \\ \hline
     $K6_9$      & $-2$, $2/5$     &$\geq 22$   &   \\ \hline
     $K6_{23}$  & $-11$, $-11/3$  &$\geq 17$ &  $8_2$ \\ \hline
     $K6_{24}$  & $13$, $-13/3$    &$\geq 18$ & $9_3$  \\ \hline
     $K6_{37}$   & 7, $7/3$        &$\geq 15$   &  $15n_{41127}$ \\ \hline
     $K7_1$      & 1, $-1/5$       & $=20$ &  $12a_{803}$ \\ \hline
     $K7_2$      &  1, $1/6$        & $=24$ & $13a_{3143}$ \\ \hline
     $K7_{10}$   & $-4$, $4/7$     &$\geq 31 $  &   \\ \hline
     $K7_{11}$   &  $-3$, $3/7$     &$\geq 29 $   &   \\ \hline
     $K7_{41}$   &  $-5$, $5/4$     &$\geq 18$  &   \\ \hline
     $K7_{44}$   &  7, $7/5$        &$\geq 23 $   &   \\ \hline
     $K7_{45}$  & $-15$, $-15/4$  &$\geq 23 $   &  $10_2$ \\ \hline
     $K7_{46}$   & $17$, $-17/4$   & $\geq 24$  &  $11a_{364}$ \\ \hline
     $K7_{95}$   &  11, $11/2$      & $\geq 13$ &  $10_{128}$ \\ \hline
     $K7_{96}$   &  13, $13/3$      &$\geq 18 $   &  $11n_{57}$ \\ \hline
     $K7_{98}$    & 14, $14/3$      & $\geq 20$  &  $12n_{243}$ \\ \hline
     $K7_{129}$  &  $-7$, $7/3$     &$\geq 15$  &   \\
      \hline
    \end{tabular}
     \vskip 0.03in
     \caption{Knots in the SnapPy census and skein module dimension bounds for their surgeries.} 
    \label{tab: qhyp knots2-7}
    \end{center}
  \end{table}

Here, we borrow the   tables of shared surgeries from \cite{KM} and use Theorem \ref{shared} to obtain information about the dimension of the $\Q(A)$-Kauffman bracket skein module of these manifolds. 
In the first column of Tables \ref{tab: qhyp knots2-7} and \ref{tab: qhyp knots8-9}
the knot $K$  is given with  the notation of the  SnapPy census.
The second column gives surgery slope pairs $a/b$,
$p/q$,  such that $M_K(a/b)\cong M_{4_1}(p/q)$. Since $4_1$ is amphicheiral, and hence we have $M_{4_1}(-p/q)\cong M_{4_1}(p/q)$,  the
  slopes $p/q$  may be replaced with their negative.
  The last column
identifies some of the knots in the left columns with their notation, say, from \cite{knotinfo}.
 Most of the knots in the first column are not 2-bridge knots
 and have diagrams with high crossing numbers \cite{ChampanerkarKofmanMullen}.

  \begin{table}
       \begin{center}
       \scriptsize 
      \begin{tabular}{ |c|c|c|c|c| }
       \hline
       $K$ & Slope $a/b, p/q$&  $\dim_{\Q(A)}  \S(K(a/b), Q(A))$ & Knot \\
       \hline
       $K8_1$      &  1, $-1/6$      & $=24$ &  $14a_{12741}$ \\ \hline
       $K8_2$      &  1, $1/7$        &$= 28$ &  $15a_{54894}$ \\ \hline
       $K8_9$       & $-5$, $5/9$     &$\geq 38$   &   \\ \hline
       $K8_{10}$    & $-4$, $4/9$    &$\geq 39$   &   \\ \hline
       $K8_{61}$    & 9, $9/7$       &$\geq  32$   &   \\ \hline
       $K8_{62}$   & 11, $11/8$      & $\geq 37$  &   \\ \hline
       $K8_{64}$   & $-19$, $19/5$   & $\geq 29 $  &  $12a_{722}$ \\ \hline
       $K8_{65}$  & $-21$, $21/5$   &$\geq 30 $   &  $13a_{4874}$ \\ \hline
       $K8_{96}$    & 11, $11/5$     & $\geq 25 $  &   \\ \hline
       $K8_{105}$  & $-16$, $16/7$   & $\geq 37$ &   \\ \hline
       $K8_{133}$  & 22, $22/5$      &$\geq  32$  &   \\ \hline
       $K8_{135}$   & 23, $23/5$      &$\geq 31$   & \\ \hline
       $K8_{143}$  & $-13$, $13/4$   &$\geq 23 $&   \\ \hline
       $K8_{145}$   & 1, $1/2$       &$=8 $   &  $14n_{18212}$  \\ \hline
       $K8_{268}$  & 9, $9/4$       &$\geq 20 $  &   \\ \hline
       $K9_1$      & $-1$, $-1/7$   &$\geq 28$   &   \\ \hline
       $K9_2$      & 1, $1/8$       &$\geq 32$  &   \\ \hline
       $K9_8$      & $-6$, $6/11$    &$\geq 48$ &   \\ \hline
       $K9_9$    & 5, $5/11$      &$\geq 46$  &   \\ \hline
       $K9_{83}$   & 13, $13/10$     &$\geq 46 $  &   \\ \hline
       $K9_{85}$   & $-15$, $15/11$  & $\geq 51$ &   \\ \hline
       $K9_{93}$  & 23, $-23/6$    &$\geq 35 $   &  $14a_{12197}$ \\ \hline
       $K9_{94}$   & 25, $-25/6$    &$\geq 36$ &  $15a_{85258}$ \\ \hline
       $K9_{152}$  & 20, $20/9$      &$\geq 47$   &   \\ \hline
       $K9_{155}$  &  $-25$, $25/11$  &$\geq 56 $  &   \\ \hline
       $K9_{242}$   & 31, $31/7$      &$\geq 43$ &   \\ \hline
       $K9_{244}$  & $-32$, $32/7$  &$\geq 45 $  &   \\ \hline
       $K9_{282}$  & $-21$, $21/4$   &$\geq  26$  &   \\ \hline
       $K9_{296}$  & 27, $27/5$      &$\geq 33 $  &   \\ \hline
       $K9_{299}$  & 19, $19/3$      &$\geq 21$ &   \\ \hline
       $K9_{435}$  & $-3$, $3/4$     &$\geq  17$ \\    \hline  
        \end{tabular}
       \vskip 0.03in
      \caption{ Knots in the SnapPy census and skein module dimension bounds for their surgeries.}
       \label{tab: qhyp knots8-9}
      \end{center}
    \end{table}

\begin{remark}
By  Table \ref{tab: qhyp knots2-7},  for $-2$-surgery  the knot $K=10_{132}$, we have $M_{K}(-2)\cong M_{4_1}(2/3)$.
Hence in particular $M_K(-2)$ is  hyperbolic and  non-Haken. By Theorem \ref{shared}, $\S(M, \Z[A^{\pm 1}])$ is a  finitely generated over
$ \Z[A^{\pm 1}]$. It is interesting to note that $a/b=-2$ is a boundary slope for $K$ \cite{knotinfo}
and hence there is a properly embedded essential surface $S$ in $M_K$  with each component  of $\partial S$
having slope $-2$ on $\partial M_K$. However this surface doesn't  survive the Dehn filling along $-2$ slope.
\end{remark}

\bibliographystyle{hamsalpha}
\bibliography{biblio}
\end{document}